\documentclass[12pt, a4paper]{amsart}
\usepackage{amsfonts,amsmath, amsthm, amssymb}
\usepackage{latexsym}
\usepackage{enumitem}

\usepackage[dvips]{graphicx}
\usepackage{psfrag}
\usepackage{xcolor}
\usepackage{tikz}
\usetikzlibrary{shapes,arrows,positioning,intersections}
\usepackage{hyperref}

\newtheorem{theorem}{Theorem}[section]
\newtheorem{lemma}[theorem]{Lemma}
\newtheorem{corollary}[theorem]{Corollary}
\newtheorem{proposition}[theorem]{Proposition}

\newtheorem{definition}[theorem]{Definition}
\newtheorem{remark}[theorem]{Remark}
\newtheorem{example}[theorem]{Example}

\newcommand{\marginnotem}[1]{\mbox{}\marginpar{\tiny\raggedright\hspace{0pt}{\color{magenta} {\bf Mark}$\blacktriangleright$  #1}}}

\newcommand{\useonX}{} 
\newcommand{\useonM}{\underline}

\newcommand{\tm}{\useonX{m}}
\newcommand{\tB}{\useonX{B}}
\newcommand{\tS}{\useonX{S}}
\newcommand{\tc}{\useonX{c}}
\newcommand{\tv}{\useonX{v}}
\newcommand{\tw}{\useonX{w}}
\newcommand{\hB}{\useonM{B}}
\newcommand{\hS}{\useonM{S}}

\newcommand{\hm}{\useonM{m}}

\newcommand{\Pa}{\mathbf{P}}
\newcommand{\Fu}{\mathbf{F}}

\newcommand{\sqbd}{\partial^2 X}
\newcommand {\R}{\mathbb{R}} 

\DeclareMathOperator{\diam}{diam}

\DeclareMathOperator{\pr}{pr}

\makeatletter
\@namedef{subjclassname@2020}{\textup{2020} Mathematics Subject Classification}
\makeatother

\begin{document}

\title{Dimension of the Feigenbaum attractor}

\author{Mark Pollicott}


\maketitle

\begin{abstract}
In this note we propose an effective method to estimate the dimension of the Feigenbaum attractor for the period doubling phenomenon.   In particular, we will describe a way to convert the highly accurate estimates for $g$ into better estimates on $\dim(X)$.
\end{abstract}

\section{Introduction}
The original experimental  discoveries and conjectures  of  Feigenbaum \cite{feigenbaum}
and Coullet-Tresser \cite{ct} in the mid-1970s have been  an important catalyst in the development of the  general theory of renormalization in dynamical systems.
These important  empirical results described the period doubling bifurcations of families of unimodal maps and presented a framework for understanding the underlying mechanism.
The majority  of these  influential conjectures  were subsequently proved in 1982 by Lanford, with later profound advances  by H. Epstein, Lyubich, McMullen,  Sullivan  and others. 
 
 \begin{theorem}[Feigenbaum Conjectures :  Lanford's Theorem]
 There  exists a $C^\omega$  map $g: [-1,1] \to [-1,1]$
 such that:
\begin{enumerate}
\item $g(0) = 1$;
\item  $g$ is unimodal  (i.e., a functions which is first monotone increasing for $x<0$ and then monotone decreasing for $x>0$ after passing the critical point at $0$ with $g'(0) = 0$ and $g''(0) <  0$); and 
\item   $g$ is symmetric (i.e., $g(x) = g(-x)$),
\end{enumerate}
such that $g$ satsifies the \emph{ Cvitanovic-Feigenbaum Functional equation} $\mathcal R(g) = g$ where 
 $$
\mathcal R g(x) = \alpha g \circ g \left(\frac{x}{\alpha} \right)
\hbox{ and }
\alpha =
  -1/f(1). \eqno(1.1)$$
 \end{theorem}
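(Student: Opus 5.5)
The plan follows Lanford's computer-assisted strategy: recast $\mathcal R g = g$ as a fixed-point problem on a Banach space of even analytic functions, find a good approximate solution, and use a contraction argument to show a true solution lies nearby.

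\textbf{Setting up the space.} By the symmetry requirement (3), I write $g(x) = \phi(x^2)$, so the unknown is an analytic function $\phi$ on a complex disc $D$ around $[0,1]$. I normalise by $\phi(0) = 1$, which gives condition (1), and I take $\alpha = -1/g(1) = -1/\phi(1)$. This $\alpha$ is the quantity written $-1/f(1)$ in (1.1), with $f = g$. The function space is the Banach space $\mathcal A$ of functions bounded and analytic on $D$, real on the reals, with the sup norm. On a neighbourhood $U$ of a candidate $\phi_0$, the operator
$$T\phi(z) = \alpha\, \phi\Bigl(\phi(z/\alpha^2)^2\Bigr), \qquad \alpha = -1/\phi(1),$$
is well defined and analytic, provided the inner compositions map $D$ into $D$. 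Its fixed points are exactly the solutions of $\mathcal R g = g$ with the normalisation.

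\textbf{Approximate solution and Newton map.} Next I compute a polynomial $\phi_0$ of moderate degree with $\|T\phi_0 - \phi_0\|$ tiny, using a numerical Newton iteration. The known value $\alpha \approx -2.5029$ serves as a check. I then form the Newton-type operator
$$N\phi = \phi - (I - DT(\phi_0))^{-1}(T\phi - \phi),$$
or a finite-rank approximation $L$ to $(I - DT(\phi_0))^{-1}$. Its fixed points coincide with those of $T$.

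\textbf{Contraction and qualitative properties.} Using rigorous interval arithmetic, together with explicit bounds for the tails of the power series, I plan to verify three things:
\begin{enumerate}
\item the inclusion $\phi(z/\alpha^2)^2 \in D$ holds for all $\phi$ in a ball $B(\phi_0, r)$;
\item $\|DN\| \le \kappa < 1$ on that ball;
\item $\|N\phi_0 - \phi_0\| \le (1-\kappa) r$.
\end{enumerate}
The contraction mapping theorem then produces a unique fixed point $\phi^*$ in the ball, and I set $g(x) = \phi^*(x^2)$. Analyticity and the equation $\mathcal R g = g$ follow at once. For unimodality, $g'(x) = 2x\phi^{*\prime}(x^2)$, so it suffices that $\phi^{*\prime} < 0$ on $[0,1]$. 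This holds because $\phi_0' \le -c < 0$ there and, by Cauchy estimates on the slightly smaller region, $|\phi^{*\prime} - \phi_0'| \le C r < c$. In particular $g''(0) = 2\phi^{*\prime}(0) < 0$. Finally, the bounds $\phi^*(1) \in (-1, 0)$, so that $|\alpha| > 1$, and $g([-1,1]) \subset [-1,1]$ come directly from the numerical enclosure.

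\textbf{Main obstacle.} The hardest step is (2), bounding $DN$ rigorously in infinite dimensions. $DT(\phi)h$ involves both $h$ composed with $\phi$ and terms coming from the dependence of $\alpha$ on $h(1)$. The plan is to split $\mathcal A$ into a finite block of low-order Taylor coefficients, handled by exact interval matrices, and a high-order tail. Because $T$ composes with a strict contraction $z \mapsto z/\alpha^2$ that maps $D$ well inside itself, the tail is strongly damped. Choosing the domain $D$ carefully, as Lanford did with a disc about $z \approx 0.5$, is where most of the work lies.
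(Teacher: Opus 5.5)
You follow Lanford's computer-assisted fixed-point argument, which is all the paper offers for this statement: it cites Lanford and notes that the solution comes from a fixed-point theorem on holomorphic functions over an explicit domain. But your fixed-point problem, as set up, has no solution. With $\phi(0)=1$ and $\alpha=-1/\phi(1)$,
\[
T\phi(0)=\alpha\,\phi\bigl(\phi(0)^2\bigr)=\alpha\,\phi(1)=-1 ,
\]
so $T$ sends the normalised space $\{\phi(0)=1\}$ into $\{\phi(0)=-1\}$. No normalised $\phi$ is fixed, and $\|T\phi_0-\phi_0\|\ge 2$ for every normalised $\phi_0$. Your step ``compute $\phi_0$ with $\|T\phi_0-\phi_0\|$ tiny'' therefore cannot succeed. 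Your later claims clash in the same way: $\phi^*(1)\in(-1,0)$ forces $\alpha=-1/\phi^*(1)>1$, not $\alpha\approx-2.5029$.

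You inherited this from the printed statement, which is itself inconsistent. The relation $g(0)=\alpha g(g(0))=\alpha g(1)$ forces $\alpha=1/g(1)$, and (1.1) with $\alpha=-1/g(1)$ is actually solved by $-g$, which equals $-1$ at $0$ and has a minimum there. You should flag this rather than adopt it, and fix the convention in one of two equivalent ways:
\begin{enumerate}
\item take $\alpha=1/\phi(1)$, so $\alpha\approx-2.5029$ and $\phi(1)=g(1)\approx-0.3995$, consistent with the interval $[1/\alpha,1]=[g(1),1]$ used later in the paper; or
\item keep $\alpha=-1/\phi(1)\approx 2.5029$ and set $T\phi(z)=-\alpha\,\phi\bigl(\phi(z/\alpha^2)^2\bigr)$.
\end{enumerate}
Either way $T\phi(0)=1$ whenever $\phi(0)=1$, so $T$ preserves the affine space $\{\phi(0)=1\}$. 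Run the contraction argument there, with tangent space $\{h: h(0)=0\}$.

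Two smaller points also need repair. First, a genuinely finite-rank $L$ is not injective, so fixed points of $N\phi=\phi-L(T\phi-\phi)$ need not be fixed points of $T$. Take $L$ to be the approximate inverse on the finite Taylor block and the identity on the tail, and verify that it is invertible. Second, the tail damping does not come from $z\mapsto z/\alpha^2$ alone. $DT(\phi)h$ contains a term proportional to $h\bigl(\phi(z/\alpha^2)^2\bigr)$, and damping that term requires $\{\phi(z/\alpha^2)^2 : z\in D\}$ to be compactly contained in $D$, uniformly over the ball. Mere membership in $D$, as in your item (1), is not enough. With these corrections the argument is the standard one.
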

 
 \begin{figure}[h!]
  \includegraphics[width=0.4\textwidth]{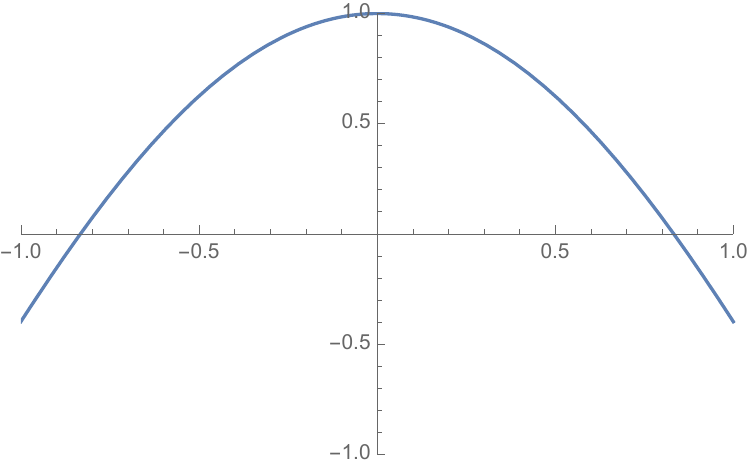}
  \hskip 1cm
    \includegraphics[width=0.4\textwidth]{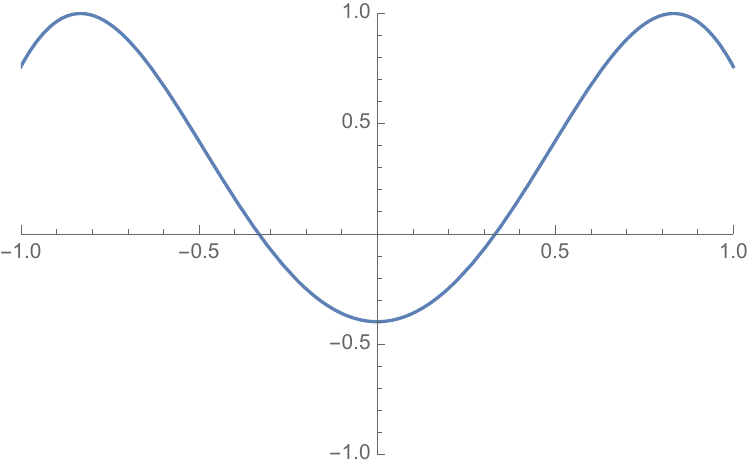}
 \caption{(i) A plot of the  unimodal $g$ which is a fixed point for the Feigenbaum-Cvitanovic functional equation (1);  (ii) A plot of  $g\circ g$, the middle portion of which looks like an inverted and rescaled plot of $g$.
 }
 \end{figure}
 
 \noindent 
In 
Lanford's original  proof he  constructed a power series solution
$$
g(x) =  1+ \sum_{n=1}^\infty a_{2n} x^{2n}. \eqno(1.2)
$$
The solution was based on the use of fixed point theorems applied to a suitable family of holomorphic functions on an explicit  domain.
The proof is computer assisted and 
 many of the values for $a_m$ can be  computed  to high precision (e.g., for $m \leq 170$, say, the $a_m$ are computed to in excess of  $150$  decimal places in \cite{mathar}). 
 
  \subsection{Numerical invariants}
Associated to the solution $\mathcal R (g) = g$ are a number of interesting  numerical values (dubbed ``Feigenvalues'').
The best known such value is probably
$$\delta = 
4.66920160910299067185320382046620161  \ldots$$
 which played an important role in Feigenbaum's original empirical discoveries for the logistic map $F_mu(x) = 1 - \mu x^2$ converge.  More precisely, if $\mu_1 < \mu_2 < \mu_3 < \cdots $ are the  parameter values for period doubling then 
 $$
 \frac{\mu_n-\mu_{n-1}}{\mu_{n+1} - \mu_n} \to \delta \hbox{ as } n \to +\infty.
 $$
  A second value is  
  $$\alpha = -\frac{1}{g(1)} = -2.5029078750958928222839028732182157\ldots$$
  which can be  characterized by the (signed) distances
   $(d_n)_{n=1}^\infty$ of the  $2^n$-attracting  orbit  from the origin  at those  parameter values $\mu$ for which $0$ is superattracting.  More precisely,    
    $$
 \frac{d_n}{d_{n+1}} \to \delta \hbox{ as } n \to +\infty.
 $$
 
The constants $\delta$ and $\alpha$ are universal in that the same constant arises for any similar family of unimodal maps.  
 Furthermore,  these values  can be extracted from the function $g$ and thus 
 a  detailed knowledge of the coefficients $(a_m)$ in (1.2) allows  them    to be computed to high precision. 
  
 Finally, we want to consider a third  numerical value associated to $g$.
We first need the following definition.

\begin{definition}
The \emph{Feigenbaum attractor}
$X = X(g)  \subset [-1,1]$ 
 is the $g$-invariant Cantor set which is the closure of the orbit
  of the critical point, i.e., $X = \overline{\cup_{n=0}^\infty g^n(0)}$.
\end{definition}
The restriction $g: X \to X$ is topologically conjugate to a dyadic odometer (or adding machine).

A natural numerical invariant is  the Hausdorff dimension $\dim(X) \in (0,1)$ of the attractor $X$.
Again, this  is a universal value in the following sense:
 
 \begin{theorem}[Coullet-Tresser geometric rigidity of Cantor sets \cite{ct}, \cite{sullivan}]
  Any quadratic map $f$ whose critical point has an orbit with the same combinatorics 
  as $g$ has an attracting Cantor set $X(f)$ with Hausdorff dimension $ \dim X(f) = \dim(X(g))$.
 \end{theorem}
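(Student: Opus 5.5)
The natural route is to build a conjugacy $h: X(g) \to X(f)$ and show that it is bi-Lipschitz, up to an error that vanishes at small scales. Hausdorff dimension is invariant under bi-Lipschitz maps. More precisely, if $h$ satisfies $C^{-1}|x-y|^{1+\epsilon} \le |h(x)-h(y)| \le C|x-y|^{1-\epsilon}$ for every $\epsilon>0$, then the dimensions agree.

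\emph{Step 1: cylinder structure.} Both $X(g)$ and $X(f)$ come with the nested dyadic cover given by the period-doubling combinatorics. At level $n$ there are $2^n$ closed intervals $I_w^n$, indexed by words $w\in\{0,1\}^n$. They are the closures of the pieces $g^j(J_n)$, $0\le j<2^n$, where $J_n$ is the central interval containing $0$ on which $g^{2^n}$ is renormalizable. Because the combinatorics of the two critical orbits agree, the map $g^j(0)\mapsto f^j(0)$ extends to a homeomorphism $h$. This $h$ maps $I_w^n(g)\cap X(g)$ onto $I_w^n(f)\cap X(f)$, and it conjugates the two odometers.

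\emph{Step 2: ratio convergence, the key step.} Here I would invoke the convergence of renormalizations, $\mathcal R^n f\to g$. By the Lyubich--McMullen--Sullivan theory this holds exponentially fast, at rate $O(\lambda^n)$ with $\lambda<1$, in a $C^1$ (indeed analytic) topology on a neighbourhood of $[-1,1]$. Since $g$ is the fixed point of (1.1), the rescaled $g$-cylinders inside $J_n$ are exactly self-similar under $x\mapsto x/\alpha$. For $f$, the corresponding pieces are images of the pieces of $\mathcal R^n f$ under an affine rescaling. Pushing them forward to the other cylinders $g^j(J_n)$ uses branches of $f^j$ with bounded distortion, supplied by Koebe and real bounds. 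The conclusion to aim for is that, for every child $w'$ of $w$ at level $n+1$,
$$\frac{|I^{n+1}_{w'}(f)|}{|I^n_w(f)|} = \frac{|I^{n+1}_{w'}(g)|}{|I^n_w(g)|}\,(1+O(\lambda^{n})).$$

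\emph{Step 3: conclude.} Multiply these ratios along a branch. Since $\sum_n\lambda^n<\infty$, this gives a uniform constant $C$ with $C^{-1}\le |I_w^n(f)|/|I_w^n(g)|\le C$ for all $n$ and $w$. Now take $x,y\in X(g)$, and let $n$ be the deepest level at which they share a cylinder $I_w^n$. The real bounds (bounded geometry) show that the gaps between sibling cylinders are comparable to the parent. Hence $|x-y|\asymp |I_w^n(g)|$, and likewise $|h(x)-h(y)|\asymp|I_w^n(f)|$. So $h$ is bi-Lipschitz on $X(g)$, and therefore $\dim X(f)=\dim X(g)$.

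\emph{Main obstacle.} The hard part is Step 2, specifically the uniformity over all $2^n$ cylinders and not just the central one. Controlling the distortion of $f^j$ on $J_n$ for $j<2^n$ requires Sullivan's real a priori bounds together with the Koebe principle. Even granting exponential convergence, any fallback would still have to rest on exponential convergence of $\mathcal R^n f$. If only $o(1)$ convergence were available, the product of the ratios might not be bounded. Then $h$ would only be shown to have subexponential distortion, $|I^n_w(f)|/|I^n_w(g)|=e^{o(n)}$. That is still enough, because $\dim$ is determined by the exponential growth rates of cylinder lengths, computed for instance through a pressure or Moran-type covering argument. This weaker estimate therefore suffices, and I would use it as the fallback.
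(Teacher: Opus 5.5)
\textbf{Overall.} The paper does not prove this theorem. It only cites Coullet--Tresser and Sullivan, so your outline has to be judged against the standard argument from those sources. Your architecture is that argument: conjugate the odometers, get uniform convergence of child-to-parent ratios, multiply along branches to get $|I^n_w(f)|\asymp|I^n_w(g)|$, then conclude bi-Lipschitz via comparable gaps. Steps 1 and 3 are fine.

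\textbf{The gap in Step 2.} The problem is the mechanism you give for Step 2. Koebe and the real bounds give bounded distortion of $f^{j-1}$ on $f(J_n)$, but not distortion close to $1$. The orbit $f^i(J_n)$, $i<2^n$, returns to the critical point at distances comparable to its own length; for instance $f^{2^{n-1}}(J_n)$ is the sibling of $J_n$ in $J_{n-1}$. Each such passage distorts by a definite factor. So pushing forward the geometry of $J_n$ with bounded distortion only shows that the $f$-ratio and the $g$-ratio agree up to a fixed factor $K$. Along a branch this compounds to $K^{\pm n}$, which says nothing about dimension. Your fallback does not avoid this: $e^{o(n)}$ still needs the ratios to agree up to $1+o(1)$ uniformly over all $2^n$ cylinders, and bounded distortion does not give that either.

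\textbf{The missing idea.} What is missing is why the shallow levels, where $\mathcal R^kf$ is far from $g$, still produce the same relative geometry.
\begin{itemize}
\item Write $j=\sum_{k<n}\epsilon_k2^k$ and factor $f^j|_{J_n}=(f^{2^0})^{\epsilon_0}\circ\cdots\circ(f^{2^{n-1}})^{\epsilon_{n-1}}$. Each factor $f^{2^k}$ acts on a piece inside $J_{k+1}\subset J_k$, where it is an affine conjugate of $\mathcal R^kf$; the same holds for $g$.
\item For $k$ close to $n$, the normalized factor is $O(\lambda^k)$-close to the one for $g$.
\item For small $k$, the piece $P$ has relative size $O(\theta^{n-k})$ in $J_k$, with $\theta<1$ from the real bounds. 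Write $\mathcal R^kf=\phi_k\circ Q$ and $g=\phi\circ Q$, with $Q(x)=x^2$ and $\phi_k,\phi$ diffeomorphisms of uniformly bounded nonlinearity. The fold $Q$ is common to both. On the short interval $Q(P)$, both $\phi_k$ and $\phi$ are affine up to $O(\theta^{n-k})$. Hence the two maps rearrange the children of $P$ identically up to $O(\theta^{n-k})$.
\item With uniform Lipschitz control of these relative-geometry maps along the chain, the total discrepancy is $\lesssim\sum_{k<n}\min(\lambda^k,\theta^{n-k})=O(\mu^n)$, where $\mu=\max(\lambda,\theta)^{1/2}<1$.
\end{itemize}
This is Sullivan's proof that exponential convergence of renormalization gives $C^{1+\beta}$ rigidity of the Cantor sets. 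It yields your Step 2 with rate $\mu^n$ rather than $\lambda^n$, which is harmless for summability. With merely $o(1)$ convergence of $\mathcal R^kf$, the same sum is $o(1)$, and that is what your fallback actually needs.

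\textbf{A smaller omission.} You never address the word ``attracting'' in the statement, i.e.\ that the critical orbit closure is the $\omega$-limit set of almost every point. That is a separate known fact and does not follow from your argument.
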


   Unfortunately, the numerical value of the Hausdorff dimension $\dim(X)$ is  notoriously difficult to rigourously estimate  despite  the  map $g$ being known  to great accuracy.   The purpose of this note is to describe a way to convert the highly accurate estimates for $g$ into better estimates on $\dim(X)$.

\subsection{Estimates on $\dim(X)$}
The difficulty is in finding an efficient way to convert a detailed knowledge of the coefficients $(a_m)$ for $g$ into good estimates on $\dim(X)$.
Estimates for the Hausdorff dimension $\dim(X)$ of the attractor were given:   by  Grassberger \cite{grassberger2} (to $8$ decimal places),
Bensimon, Jensen and Kadanoff \cite{bensimon} (to $10$ decimal places) and Kovacs.  However, the    best {\it non-rigorous}  (albeit computationally stable) numerical result to date is  due Christiansen et al  \cite{Christiansen} given to  $27$ decimal places:
$$\dim(X) = 0.538 045 143 580 549 911 671 415 56 \ldots \eqno(1.3)$$
This estimate is based on the method of cycle expansions which depends on periodic points for $g$ whose exponential growth makes it difficult to make significant improvements using this method.  
 Moreover, it is difficult to get effective error bounds (which is inherent in  \cite{jp}).
Other estimates due to Grassberger  \cite{grassberger2} 
are  described in  (\cite{rasband}, pp.80-81).

There are difficulties in getting rigorous error bounds. However, using the simple  on the derivatives of $T_1'$ and $T_2$, there is a basic bound in
(\cite{falconer}, p.141) of 
$0.5345 
< \dim(X) < 
 0.5544$.
More recently, Burbanks, Osbaldstein and Thurlby \cite{BOT}  refined this approach to give  bounds
$$
0.53705 \ldots
 < \dim(X)  <
 0.53917 \ldots
$$
which they claim to be the best rigorous values.

In this note we will use a different method to estimate $\dim(X)$ which appears to have the advantage that it is quite successful in exploiting  the very precise knowledge of the values $(a_n)$ to automatically give more accurate rigorous estimates for $\dim(X)$
Our main result is the following.
\footnote{Subject to  transcription errors for the values of $a_m$ and accepting the accuracy of the Mathematica computations}

\begin{theorem}\label{estimate}
We have the  estimate




$$
\begin{aligned}
\dim(X) &=  0.53804 51435 80549 91167 14155 67374 98629 2737\cr
&\qquad 9 64965 87785  \cdots  \pm 10^{-50}
\end{aligned} \eqno(1.4)
$$
accurate to $52$ decimal places. \end{theorem}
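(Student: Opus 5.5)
The approach is to realise $X$ as the limit set of a contracting iterated function scheme built from $g$. Then $\dim(X)$ is the zero of a pressure function, and I would compute that zero via determinants of transfer operators acting on a space of analytic functions.

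\textbf{Step 1: the expanding Markov description.} The functional equation $\mathcal R g = g$ gives $g(x) = \alpha g\circ g(x/\alpha)$. Consequently $X$ is invariant under the two contractions
$$T_1(x) = \frac{x}{\alpha}, \qquad T_2(x) = g\left(\frac{x}{\alpha}\right) \quad (\text{or its inverse-branch variant}),$$
and $X = T_1(X)\cup T_2(X)$ is a disjoint union; this is the standard Feigenbaum renormalisation picture, compare the maps $T_1,T_2$ used for the bound in \cite{falconer}. Both maps extend holomorphically to a complex disc $D\supset[-1,1]$ with $\overline{T_i(D)}\subset D$. This inclusion I would verify rigorously, using the explicit coefficients $a_{2n}$ together with a tail bound on $\sum_{n>N} |a_{2n}| r^{2n}$ coming from Lanford's domain of analyticity. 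Bowen's formula then gives $\dim(X)=s$, where $s$ is the unique zero of $t\mapsto P(-t\log|T'|)$.

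\textbf{Step 2: determinant.} The transfer operator $\mathcal L_t h = \sum_i |T_i'|^t\, h\circ T_i$ acts on the Bergman (or Hardy) space of $D$ and is nuclear. Hence $d(z,t)=\det(I-z\mathcal L_t)$ is entire, and $\dim(X)$ is the zero of $t\mapsto d(1,t)$. Expanding in $z$ gives
$$d(1,t) = 1+\sum_{n\ge1} b_n(t), \qquad b_n(t)\ \text{expressed through}\ \operatorname{tr}\mathcal L_t^k=\sum_{|\underline i|=k}\frac{|T_{\underline i}'(x_{\underline i})|^t}{1-T_{\underline i}'(x_{\underline i})},$$
where the sum runs over the $2^k$ fixed points $x_{\underline i}$ of the compositions $T_{\underline i}$. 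Alternatively, I would truncate $\mathcal L_t$ to a finite matrix in a monomial or Chebyshev basis on $D$ and use the approximation coefficients directly. This alternative is better suited to the polynomial data for $g$.

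\textbf{Step 3: error control, the main obstacle.} The key estimate is super-exponential decay: the singular values of $\mathcal L_t$ satisfy $s_k\le C\theta^{k}$, where $\theta<1$ is the contraction ratio of the $T_i$ from $D$ into a smaller disc. Hence $|b_n(t)|\le C^n\theta^{n(n+1)/2}/\prod_{j\le n}(1-\theta^j)$, uniformly for $t$ near $0.538$. To reach $10^{-50}$, one truncates at some $N$ so that the tail is below $10^{-52}$ and bounds $|\partial_t d(1,t)|$ from below. Then an interval $[s^-,s^+]$ of width $2\cdot 10^{-50}$ on which $d(1,\cdot)$ changes sign, with the tail included, certifies the root.

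The hard points are as follows.
\begin{enumerate}
\item Getting explicit $D$ and $\theta$ small enough that the required $N$ is feasible. I would try first to choose a disc centred off the origin, and possibly a second disc for $T_2$, to optimise $\theta$.
\item Propagating the $10^{-150}$-level errors in the $a_{2n}$, and in $\alpha$, through the fixed points and the traces.
\end{enumerate}
Both of these are handled with interval arithmetic, so that the final sign check is rigorous subject to the stated caveats.
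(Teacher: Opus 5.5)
Your route differs genuinely from the paper's and is sound in outline, but Step 1 needs one correction. The second map must be the inverse branch $T_2(x)=g^{-1}(x/\alpha)$ of Lemma~\ref{ifs}, with $g^{-1}$ the branch onto $[g^2(1),1]$. The forward map $x\mapsto g(x/\alpha)$ gives the same set decomposition of $X$, but it is even, so it is not injective near $0\in X$, and its derivative has a simple zero at $0$. Consequently $|T_2'|^t$ has a branch point there, $\mathcal L_t$ does not act on holomorphic functions on any disc containing $X$, and Bowen's formula does not apply. For the inverse branch you must choose $D$ so that $g^{-1}$ is univalent on $\{z/\alpha : z\in D\}$. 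In particular this set must stay away from the critical value $g(0)=1$.

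With that fix, here is how the two routes compare. The paper uses no determinants, traces or periodic orbits. It certifies each bound a posteriori by the min--max criterion of Lemma~\ref{minmax}. A strictly positive polynomial $h$ (the collocation eigenvector at Chebyshev nodes, $m=300$) with $\inf \mathcal L_t h/h>1$, resp.\ $\sup \mathcal L_t h/h<1$, forces $\lambda(t)\ge 1$, resp.\ $\lambda(t)\le 1$. Lemma~\ref{dim} and bisection in $t$ then trap $\dim(X)$. Validity uses only positivity of $\mathcal L_t$ on real functions on $[1/\alpha,1]$, so no complex domain, contraction ratio, singular-value constant or tail bound is needed. How $h$ was found affects only sharpness, and the rigorous work reduces to evaluating $\mathcal L_t h/h$ and controlling its extremum over an interval (cf.\ Lemma~\ref{single}).

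Your approach gives a direct root enclosure with fully a priori error accounting, but it needs explicit $D$, $\theta$ and $C$. The bound on $b_n(t)$ is usually pessimistic, so the periodic-orbit version runs into exactly the $2^N$ growth the paper cites as the obstacle to pushing cycle expansions beyond 27 digits. Your Galerkin variant avoids this and is closer in spirit to the paper's collocation; there you would bound $\det(I-\mathcal L_t)-\det(I-\mathcal L_tP_N)$ in trace norm, with $P_N$ the projection onto the first $N$ basis vectors.

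Finally, a sign change of $d(1,\cdot)$ only shows that \emph{some} eigenvalue of $\mathcal L_t$ equals $1$. Your lower bound on $|\partial_t d(1,t)|$ should therefore be proved on an interval already known to contain $\dim(X)$, for example from the bounds $0.53705<\dim(X)<0.53917$ quoted in the introduction.
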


 In particular, this confirms  the non-rigorous   estimate  of Christiansen et al  in  (1.3). 
 
The method of proof of the bound in Theorem \ref{estimate} is based on a general minmax approach which seems to be more effective in converting the very precise knowledge of $g$ and the coefficients $a_m$ into estimates on $\dim(X)$.

\section{Iterated function schemes and Dimension}

We begin with some useful background material.

  \subsection{An iterated function scheme}
There is a well known alternative  construction of $X = X(g)$ as the limit set of an iterated function scheme.  More precisely, we want to use  the following lemma.
 
\begin{lemma}\label{ifs}
The Feigenbaum attractor is the limit set of the  iterated function scheme consisting of two  contractions
$T_1, T_2: [1/\alpha, 1] \to [1/\alpha, 1]$ given by 
$$
T_1(x) = \frac{x}{\alpha}
\hbox{ and }
T_2(x) = g^{-1}\left(\frac{x}{\alpha}\right)
$$
(i.e., $X$ is the smallest non-empty closed set such that $X = T_1(X) \cup T_2(X)$). 
\end{lemma}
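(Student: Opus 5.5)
The plan is to use the functional equation $g(x)=\alpha g(g(x/\alpha))$ to show that $X$ satisfies the self-similarity relation $X=T_1(X)\cup T_2(X)$. Since $T_1,T_2$ are contractions, the limit set is the unique non-empty compact set with this property, so the relation identifies $X$ with it.

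\textbf{Setting up the maps.} Note $\alpha<0$ and $g(1)=-1/\alpha\in(0,1)$. The interval $I=[1/\alpha,1]$ contains the whole orbit of $0$: we have $g(0)=1$, $g^2(0)=g(1)=-1/\alpha$, and $g$ maps $[-1,1]$ onto $[g(1),1]\subset I$. Next I check that $T_1,T_2$ map $I$ into $I$ and contract.

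For $T_1$ this is immediate: $T_1(I)=[1/\alpha,1/\alpha^2]$ (reversed order) and $|T_1'|=1/|\alpha|<1$.

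For $T_2$, the point $x/\alpha$ lies in $[1/\alpha^2,1/\alpha]\cdot(-1)$, i.e.\ in a small interval around $0$. We take the branch of $g^{-1}$ on $[0,1]$ (or the appropriate sign branch so that the image lies in $I$). The image should be the interval $[g(1)$-ish$, 1]$ near $1$, disjoint from $T_1(I)$. Contraction of $T_2$ does not follow from a one-line computation; it follows from the renormalization structure. Since $g(x)=\alpha g^2(x/\alpha)$, we get $g^{-1}$ branches expressed through $g$ itself, and by the chain rule $|T_2'|<1$ on $I$. If needed, I would settle this using the known bounds on $g$ (Lanford's estimates) or by iterating the IFS once.

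\textbf{Invariance of the orbit closure.} Let $O=\{g^n(0)\}$. From $\mathcal R g=g$ we get $g^{2}(x/\alpha)=g(x)/\alpha$, and by induction $g^{2n}(x/\alpha)=g^n(x)/\alpha$. At $x=0$ this gives $g^{2n}(0)=g^n(0)/\alpha=T_1(g^n(0))$, so the even points of the orbit are exactly $T_1(O)$. The odd points satisfy $g^{2n+1}(0)=g(g^{2n}(0))=g(g^n(0)/\alpha)$, hence $g^n(0)/\alpha=g^{-1}(g^{2n+1}(0))$ for the correct branch. This does not yet match $T_2$ as written; the correct relation is instead
$$g^{2n+1}(0)=T_2\bigl(\text{something in }O\bigr).$$
I would match the branch and the composition order so that $O_{\mathrm{odd}}=T_2(O)$, possibly using $g(x/\alpha)$ versus $g^{-1}(x/\alpha)$ together with the symmetry $g(x)=g(-x)$.

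\textbf{Closure and uniqueness.} Once $O=T_1(O)\cup T_2(O)$ is established, continuity of $T_1,T_2$ and compactness of $I$ give $X=\overline O=T_1(X)\cup T_2(X)$. Hutchinson's theorem then says the contracting IFS has a unique non-empty compact invariant set $\Lambda$, and it is the smallest non-empty closed set with this property. Hence $X=\Lambda$.

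\textbf{Main obstacle.} The delicate step is the odd-orbit identity: choosing the branch of $g^{-1}$ and the domain correctly so that $T_2$ maps $O$ onto the odd orbit points. The contraction estimate for $T_2$ on all of $I$, which uses quantitative information about $g$, is the other point that needs care.
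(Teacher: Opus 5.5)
Your overall framework is the standard one: derive $X=T_1(X)\cup T_2(X)$ from the functional equation, then apply Hutchinson. The even half, $g^{2n}(0)=T_1(g^n(0))$, is also correct. The gap is the central step, which you leave open, and the route you sketch for it cannot work.

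You try to invert $g^{2n+1}(0)=g(T_1(g^n(0)))$ to recover $T_1(g^n(0))$. But $T_1(O)$ lies on both sides of the critical point, e.g.\ $T_1(g(0))=1/\alpha<0<1/\alpha^2=T_1(g^2(0))$, so no single branch of $g^{-1}$ does this. That direction only gives $O_{\mathrm{odd}}=(g\circ T_1)(O)$, i.e.\ self-similarity for the folding pair $\{T_1,g\circ T_1\}$, not for $\{T_1,T_2\}$. Note also that $g(1)=1/\alpha<0$, since $1=g(0)=\alpha g(g(0))=\alpha g(1)$; the paper's $\alpha=-1/g(1)$ is a sign slip. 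With the correct value, $T_1(I)=[1/\alpha,1/\alpha^2]$ contains $0$ in its interior, which is exactly the obstruction.

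The missing idea is to step \emph{forward}: $g(g^{2n+1}(0))=g^{2n+2}(0)=T_1(g^{n+1}(0))$. The odd points lie in $g(T_1(I))=[g(1/\alpha),1]=[g^2(1),1]$, and this interval lies in $(0,1]$. Otherwise it would contain $0$, so its $g$-image would contain $g(0)=1$; but $g^2(T_1(I))=T_1(g(I))\subset T_1(I)$ and $1\notin T_1(I)$. On $[g^2(1),1]$, $g$ is a decreasing bijection onto $T_1(I)$, since its endpoints go to $g^4(0)=1/\alpha^2$ and $g(1)=1/\alpha$. This is why the paper takes $g^{-1}$ to be the inverse of $g|_{[g^2(1),1]}$. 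Hence $g^{2n+1}(0)=g^{-1}\bigl(T_1(g^{n+1}(0))\bigr)=T_2(g^{n+1}(0))$.

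Two further corrections are needed.

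First, the orbit identity $O=T_1(O)\cup T_2(O)$ that you aim for is false. $T_2(0)=g^{-1}(0)\notin O$, since otherwise $0$ would be periodic, whereas the points $g^{2^k}(0)=T_1^k(g(0))=\alpha^{-k}$ are pairwise distinct. What holds is $O=T_1(O)\cup T_2(O\setminus\{0\})$. Passing to closures then requires $0\in\overline{O\setminus\{0\}}$, which the same formula $g^{2^k}(0)=\alpha^{-k}\to 0$ gives. Continuity and compactness then yield $X=T_1(X)\cup T_2(X)$.

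Second, contraction of $T_2$ does not follow from the chain rule by itself. Since $T_2'(x)=1/\bigl(\alpha\, g'(T_2x)\bigr)$, you need the quantitative bound $|g'|>1/|\alpha|$ on $[g^2(1),1]$. Numerically $|g'|>2$ there, but this has to come from explicit bounds on $g$ such as Lanford's.

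With these points in place, Hutchinson's theorem finishes the proof as you describe. The paper gives no argument for this lemma and only cites the literature, so these are exactly the details a complete proof must supply.
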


\noindent
See
 \cite{feigenbaum-ifs} and 
(\cite{falconer}, Theorem 8.2.1). 
The existence of a non-empty limit  set associated to  contractions is a consequence of a general result of Hutchinson
(\cite{falconer-gf}, Theorem 9.1). 
 In Lemma \ref{ifs}  we use $g^{-1}$ to denote  the inverse of $g$ restricted to $[g^2(1),1]$.

\subsection{Dimension}
We want to consider  the Hausdorff dimension of $X$ which, in light of its characterization by Lemma \ref{ifs}, coincides with the Box dimension, whose  simpler definition we briefly recall.   

\begin{definition}
Given $\epsilon > 0$ we  let $N(X, \epsilon)$ be the smallest number of open intervals of length $\epsilon > 0$ needed to cover $X$.  We can then write
$$
\dim(X) = \lim_{\epsilon \to 0} - \frac{\log N(X, \epsilon)}{\log \epsilon}.
$$
\end{definition}

We refer reader to \cite{falconer-gf} for more details.

\medskip 
 Upper and lower bounds on the Hausdorff dimension of the attractor $X$ may be obtained from properties
of the iterated function scheme and with rigorous   bounds on the renormalisation fixed-point $f$.
Previous  rigorous approaches (such as \cite{falconer} and \cite{BOT}) tend to be based on approximation of $T_1$ by (piecewise) affine maps.  However, we will describe in the next section a different approach which allows a more effective use of the detailed knowledge of $g$ to get more accurate estimates on $\dim(X)$.

 
 \section{Transfer operators}
In contrast to previous approaches, we will use what we will refer to as a
 {\it  minmax method} to estimate the dimension $\dim(X)$ 
by looking at transfer operators acting on function spaces.
The benefits of this approach will be shown with empirical estimates in the next section.
 
We can consider a family of bounded linear operators $\mathcal L_t$
(for $t \in \mathbb R$) 
on the Banach space 
 $C^1([-1,1])$
of 
continuously differential 
 functions $h:[-1,1] \to \mathbb R$ with the usual norm $\|h\| = \|h\|_\infty +  \|h'\|_\infty$ where 
$$\|h\|_\infty = \sup_{-1\leq x\leq 1}   |h(x)|.$$
The  definition of the operators is the following: 

\begin{definition}\label{transfer}
The \emph{ transfer operators}  $\mathcal L_t : C^1([-1,1]) \to  C^1([-1,1])$ 
(for $t \in \mathbb R$) 
 are 
defined by
$$
\mathcal L_t h(x) = |T_1'(x)|^t h(T_1x) +  |T_2'(x)|^t h(T_2x)
$$
for $h \in C^1([-1,1]) $ and  $x\in [-1,1]$.
\end{definition}

The spectra $\text{\rm sp}(\mathcal L_t)$ (for $t\in \mathbb R$) of the 
 transfer operators have well known and well understood  properties which we briefly summarize as follows.

\begin{lemma}[after Ruelle \cite{ruelle-book}]\label{ruelle}
For each $t\in \mathbb R$, 
\begin{enumerate}
\item
 the operator  $\mathcal L_t : C^1([-1,1]) \to  C^1([-1,1])$  has a simple maximal positive eigenvalue $\lambda(t) > 0$,  and 
 \item
 the rest of the spectrum is contained in a strictly smaller disk (i.e., $\exists \rho < \lambda(t)$ such that $\text{\rm sp}(\mathcal L_t)\setminus \{\lambda(t)\} \subset \{z\in \mathbb C \hbox{ : } |z| \leq \rho\}$). 
 \end{enumerate}
\end{lemma}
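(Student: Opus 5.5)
The plan is a Ruelle--Perron--Frobenius argument built on the contraction of $T_1,T_2$ on the relevant interval. I will treat the operator on the interval $I=[1/\alpha,1]$ that $T_1,T_2$ preserve, or on a slightly larger neighbourhood where both maps are still uniform contractions. The statement is phrased on $[-1,1]$, but since $|\alpha|>2$ and $g^{-1}$ is analytic with bounded derivative away from the critical value, the same argument applies on a small neighbourhood of $I$. Throughout, set $w_i(x)=|T_i'(x)|^t$. These weights are strictly positive and $C^1$, since $T_i'$ never vanishes on the domain: $T_1'=1/\alpha$, and $T_2'$ is bounded away from $0$ because $g^{-1}$ avoids the critical point. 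Let $\theta<1$ be a common contraction constant with $|T_i'|\le\theta$.

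\textbf{Step 1: quasi-compactness via a Lasota--Yorke inequality.} Differentiating gives
$$(\mathcal L_t^n h)'(x)=\sum_{|i|=n}\Big(w_i^{(n)}(x)'\,h(T_ix)+w_i^{(n)}(x)\,(T_i)'(x)\,h'(T_ix)\Big),$$
where $w_i^{(n)}$ are the multiplicative weights along the word $i$. Bounded distortion for $\log w_i^{(n)}$ follows from $\theta<1$ by summing a geometric series. It gives $\|(\mathcal L_t^n h)'\|_\infty\le C\theta^n\|\mathcal L_t^n1\|_\infty\|h'\|_\infty+C\|\mathcal L_t^n 1\|_\infty\|h\|_\infty$. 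By Arzel\`a--Ascoli, the inclusion $C^1\hookrightarrow C^0$ is compact. Hennion's theorem then shows that the essential spectral radius of $\mathcal L_t$ on $C^1$ is at most $\theta\,r(t)$, where $r(t)=\lim\|\mathcal L_t^n1\|_\infty^{1/n}$ is the spectral radius.

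\textbf{Step 2: a positive eigenfunction and eigenvalue $\lambda(t)=r(t)$.} I would apply Birkhoff's cone contraction to the cone $K_a=\{h>0:\ |h'(x)|\le a\,h(x)\}$. The bounded-distortion estimate shows $\mathcal L_t K_a\subset K_{\theta a+C}\subset K_{a'}$ with $a'<a$ for $a$ large. A cone of bounded projective diameter is strictly contracted in the Hilbert metric, so $\mathcal L_t^n1/\|\mathcal L_t^n1\|$ converges to some $h_t\in K_a$ with $\mathcal L_th_t=\lambda h_t$ and $\lambda>0$. Comparing with $\mathcal L_t^n1$ gives $\lambda=r(t)$.

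\textbf{Step 3: simplicity and the spectral gap.} This is the main obstacle, since I must rule out other eigenvalues on $|z|=\lambda$ and Jordan blocks. The Hilbert-metric contraction gives exponential convergence
$$\lambda^{-n}\mathcal L_t^n h\to h_t\,\nu(h)$$
uniformly on $K_a$, where $\nu$ is the eigenmeasure of the dual. Every $C^1$ function can be written $h=(h+c)-c$ with $h+c,\,c\in K_a$ for large $c$, because $|h'|\le a(h+c)$ once $c$ is large. Hence $\|\lambda^{-n}\mathcal L_t^n(h-h_t\nu(h))\|\le C\kappa^n\|h\|$ for some $\kappa<1$. This shows that $\lambda$ is simple, with a one-dimensional spectral projection and no Jordan block, and that the remaining spectrum lies in a disk of radius $\rho=\kappa\lambda<\lambda$. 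Together with Step 1, this yields both parts of Lemma~\ref{ruelle}.
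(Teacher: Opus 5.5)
Your proposal is correct. It takes a genuinely different route from the paper, which gives no argument of its own for Lemma~\ref{ruelle} and simply defers to Ruelle's Perron--Frobenius theory. You instead give a self-contained Birkhoff-cone proof.

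In fact Step~3 alone already finishes the job. Write $Ph=\nu(h)h_t$. Your estimate $\|\lambda^{-n}\mathcal L_t^n-P\|_{C^1}\le C\kappa^n$ gives $\mathcal L_t=\lambda P+N$ with $PN=NP=0$ and $\|N^n\|\le C(\kappa\lambda)^n$. This yields simplicity of $\lambda$ and a spectral gap of ratio $\kappa$ directly, so the Lasota--Yorke/Hennion step is redundant. It would only be needed if you located the peripheral spectrum by a Krein--Rutman type argument instead.

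The one point you should make explicit is why the convergence in Steps~2--3 holds in the $C^1$ norm and not merely uniformly, so that $h_t$ genuinely lies in $C^1$ rather than being only log-Lipschitz. This works precisely because $K_a$ is defined through $h'$. If $f-\alpha g$ and $\beta g-f$ lie in $K_a$, then $0<f-\alpha g<(\beta-\alpha)g$ and $|f'-\alpha g'|\le a(\beta-\alpha)g$. Hence Hilbert-metric closeness of sup-normalised elements controls their $C^1$ distance, and $\mathcal L_t^n1/\|\mathcal L_t^n1\|_\infty$ is Cauchy in $C^1$.

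No small-neighbourhood device is needed for the domain. For $x\in[-1,1]$ the point $x/\alpha$ stays in the range of the decreasing branch of $g$ and away from the critical value $g(0)=1$. So $T_1,T_2$ are real-analytic contractions of $[-1,1]$ into $[1/\alpha,1]$ with $T_2'\neq 0$, and your argument runs verbatim on $C^1([-1,1])$.

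As for what each approach buys: your route produces an explicit contraction rate $\kappa=\tanh(\Delta/4)$. Here $\Delta$ is the Hilbert-metric diameter of $K_{a'}$ inside $K_a$, which is bounded explicitly in terms of $a$, $a'$ and the length of the interval. That is exactly the kind of quantitative input the convergence claims in \S5 would need. The citation is shorter, but it leaves the reader to check that the cited theorem really covers this $C^1$ setting on the interval.
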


The relevance of these operators to the estimation of the Hausdorff dimension  of $X$ is the following simple lemma.

\begin{lemma}\label{dim}
Let  $ t_0 < t_1$ be such that the maximal eigenvalues of the operators $\mathcal L_{t_0}$ and  
$\mathcal L_{t_1}$, respectively,  
satisfy 
$$\lambda(t_1) > 0 > \lambda(t_0).$$  Then  we have that 
$$t_0 < \dim(X) < t_1.$$
\end{lemma}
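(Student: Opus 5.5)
The statement as printed cannot hold literally, since Lemma \ref{ruelle} gives $\lambda(t)>0$ for every $t$. I will read it in its intended form, in terms of the pressure $P(t)=\log\lambda(t)$. I will prove that if $P(t_0)>0>P(t_1)$, i.e.\ $\lambda(t_0)>1>\lambda(t_1)$, then $t_0<\dim(X)<t_1$. The plan is to establish Bowen's formula: $P$ is continuous and strictly decreasing, and $\dim(X)$ is its unique zero. The inequality then follows immediately.

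\emph{Step 1: a formula for $\lambda(t)$.} By Lemma \ref{ruelle} and positivity of $\mathcal L_t$, the spectral radius formula applied to the constant function $1$ gives
$$\lambda(t)=\lim_{n\to\infty}\Big(\sum_{|w|=n}|T_w'(x)|^t\Big)^{1/n},$$
uniformly in $x$. Here $T_w=T_{w_1}\circ\cdots\circ T_{w_n}$. The eigenfunction is strictly positive, which lets us compare $\mathcal L_t^n1$ with $\lambda(t)^n$ up to constants.

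\emph{Step 2: bounded distortion.} Since $T_1$ is affine and $T_2$ is real analytic with $T_2'$ bounded away from $0$, a standard telescoping argument along the contracting branches gives the following. There is a constant $K$ with $|T_w'(x)|/|T_w'(y)|\le K$ for all words $w$ and all $x,y$. Hence $|T_w'(x)|\asymp |I_w|$, where $I_w=T_w([1/\alpha,1])$.

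\emph{Step 3: strict monotonicity.} Let $0<\theta_-\le |T_i'|\le\theta_+<1$. Then
$$\theta_-^{s}\lambda(t)\le\lambda(t+s)\le\theta_+^{s}\lambda(t)\qquad(s>0).$$
So $P$ is continuous and strictly decreasing, and it has a unique zero $s_*$.

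\emph{Step 4: $\dim(X)=s_*$.}

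For the upper bound, take $t>s_*$, so $\lambda(t)<1$. Cover $X$ by the $2^n$ level-$n$ cylinders $I_w$. Using Steps 1 and 2,
$$\sum_{|w|=n}|I_w|^t\le K^t\,C\,\lambda(t)^n\to0,$$
so the $t$-dimensional Hausdorff measure of $X$ vanishes and $\dim(X)\le t$.

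For the lower bound, let $\nu$ be the eigenmeasure $\mathcal L_{s_*}^*\nu=\nu$. This gives the Gibbs property $\nu(I_w)\asymp|I_w|^{s_*}$. The images $T_1([1/\alpha,1])$ and $T_2([1/\alpha,1])$ are disjoint (strong separation), so the gaps between sibling cylinders are comparable to their size, again by distortion. Consequently any interval $J$ of length $r$ meets only a bounded number of "stopping-time" cylinders of length $\asymp r$. This yields $\nu(J)\le Cr^{s_*}$, and the mass distribution principle gives $\dim(X)\ge s_*$.

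\emph{Conclusion.} Combining Steps 3 and 4, $\lambda(t_0)>1$ forces $t_0<s_*$ and $\lambda(t_1)<1$ forces $t_1>s_*$. Hence $t_0<\dim(X)<t_1$.

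\emph{Main obstacle.} I expect the lower bound in Step 4 to be the delicate part. It requires separation together with distortion control for the nonlinear branch $T_2=g^{-1}(\cdot/\alpha)$, and in particular a lower bound on $|g'|$ on $[g^2(1),1]$, away from the critical point. My first approach would be to verify directly from the power series (1.2) that $g'$ is bounded away from zero there. If that proves awkward, I would instead quote Bowen's formula for conformal IFS with strong separation (e.g.\ \cite{falconer}).
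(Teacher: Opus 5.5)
Your proof is correct, and reading the hypothesis as $\lambda(t_0)>1>\lambda(t_1)$ is the right repair. Strictly, the printed version is vacuous rather than false, since $0>\lambda(t_0)$ never holds.

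Your argument has the same skeleton as the paper's proof: $t\mapsto\lambda(t)$ is strictly decreasing, and $\lambda(\dim X)=1$. The difference is that the paper simply cites both facts. It takes monotonicity (together with analyticity) from analytic perturbation theory and Ruelle's formula $\lambda'(t)<0$, and it takes $\lambda(\dim X)=1$ from the Bowen--Ruelle pressure formula. You prove both instead.

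Your sandwich $\theta_-^s\lambda(t)\le\lambda(t+s)\le\theta_+^s\lambda(t)$ is more elementary than the derivative formula. It gives exactly what the lemma needs, namely continuity and strict decrease; analyticity plays no role here. Your Step 4 is the standard self-contained proof of Bowen's formula. It makes explicit that only strong separation, bounded distortion and $0<\theta_-\le|T_i'|\le\theta_+<1$ are used. The paper's version is shorter but rests entirely on the references; yours can be checked without them.

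Two small points.
\begin{enumerate}
\item The ``main obstacle'' you flag is already settled by Lemma~\ref{ifs}. Since $T_2$ is a $C^1$ contraction on $[1/\alpha,1]$ onto $[g^2(1),1]$, we have $|g'|\ge 1/(|\alpha|\theta_+)>1/|\alpha|$ uniformly on $[g^2(1),1]$. The bound $\theta_->0$ needs only $\sup|g'|<\infty$.
\item Section~2 defines $\dim(X)$ by box counting. Your disjoint stopping-time cylinders each have $\nu$-mass $\gtrsim r^{s_*}$, which also gives $N(X,r)\lesssim r^{-s_*}$. So the box dimension equals $s_*$ as well, and this reading of $\dim(X)$ is covered too.
\end{enumerate}
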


\begin{proof}
The result follows directly from the following  two facts.
The maximal   eigenvalue $\lambda(t)$  of the operator $\mathcal L_t$ satisfies:
\begin{enumerate}
\item $t \mapsto \lambda(t)$ is a $C^\omega$  strictly monotone decreasing function.
\footnote{The analyticity comes from $\lambda(t)$ being an isolated eigenvalue and analytic perturbation theeory.  The strict monotonicity 
comes from  an explicit form for the first derivative being negative \cite{ruelle-book})}; and 
\item $\lambda( \dim(X)) = 1$ (by the well known Bowen-Ruelle ``pressure formula''  \cite{bowen}, \cite{ruelle}).
\end{enumerate}
\end{proof}

In order to apply the bounds  in Lemma \ref{dim} to obtain practical estimates on $\dim(X)$ we use  the following useful criteria.

\begin{lemma}\label{minmax}
 Let $\mathcal L_t$ (for  $t \in \mathbb R$) be the operators defined in Definition \ref{transfer}.
\begin{enumerate}
\item
Assume that for $t_0 \in \mathbb R$  there exists a strictly  positive $C^1$ function $h_0:[1/\alpha,1] \to \mathbb R^+$ such that 
$$\sup_{\frac{1}{\alpha}\leq x \leq 1} \frac{(\mathcal L_{t_0}h_0)(x)}{h_0(x)} <1\eqno(3.1) $$ then $\lambda(t_0) \leq 1$.
\item
Assume that for $t_1 \in \mathbb R$   there exists a strictly positive  $C^1$ function $h_1:[1/\alpha,1] \to \mathbb R$ such that 
$$\inf_{\frac{1}{\alpha}\leq x \leq 1} \frac{(\mathcal L_{t_1}h_1)(x)}{h_1(x)} >1 \eqno(3.2) $$ then $\lambda(t_1) \geq1$.
\end{enumerate}
\end{lemma}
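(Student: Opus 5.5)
The argument rests on two facts about $\mathcal L_t$: it is a positive operator, and (by Lemma \ref{ruelle}) its leading eigenvalue $\lambda(t)$ equals the spectral radius. The spectral radius can in turn be read off from the growth of $\|\mathcal L_t^n 1\|_\infty$. Throughout we work on the invariant interval $I=[1/\alpha,1]$, which both $T_1$ and $T_2$ map into itself by Lemma \ref{ifs}. So $\mathcal L_t$ makes sense on $C^1(I)$, and Lemma \ref{ruelle} applies there as well.

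For part (1), write $\theta=\sup_{x\in I}(\mathcal L_{t_0}h_0)(x)/h_0(x)<1$, so that $\mathcal L_{t_0}h_0\le \theta h_0$ pointwise on $I$. The weights $|T_i'|^{t_0}$ are positive, so $\mathcal L_{t_0}$ preserves the pointwise order. Iterating the inequality therefore gives $\mathcal L_{t_0}^n h_0\le \theta^n h_0$ for every $n$. Since $h_0$ is continuous and strictly positive on the compact interval $I$, there are constants $0<c\le C$ with $c\le h_0\le C$. Monotonicity then yields
$$\|\mathcal L_{t_0}^n 1\|_\infty\le c^{-1}\|\mathcal L_{t_0}^n h_0\|_\infty\le (C/c)\,\theta^n.$$
Part (2) is the mirror image. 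Put $\eta=\inf_{x\in I}(\mathcal L_{t_1}h_1)(x)/h_1(x)>1$. The same reasoning gives $\mathcal L_{t_1}^n h_1\ge \eta^n h_1$, and hence $\|\mathcal L_{t_1}^n 1\|_\infty\ge (c/C)\,\eta^n$ for the corresponding bounds on $h_1$.

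The remaining step, and the main one, is to show that $\lambda(t)=\lim_n \|\mathcal L_t^n 1\|_\infty^{1/n}$. To prove this, take the positive eigenfunction $v_t$ for $\lambda(t)$ given by Lemma \ref{ruelle}. Its strict positivity on $I$ follows from positivity of the operator together with simplicity of the eigenvalue, using the standard Ruelle theory. Sandwiching $v_t$ between positive constants, $a\le v_t\le b$, and applying $\mathcal L_t^n$ to $a\le v_t\le b$ gives
$$a\,\|\mathcal L_t^n1\|_\infty\le \lambda(t)^n b \quad\text{and}\quad b\,\|\mathcal L_t^n 1\|_\infty\ge \lambda(t)^n a.$$
Thus $\|\mathcal L_t^n 1\|_\infty$ is comparable to $\lambda(t)^n$ up to fixed constants. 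Combining this with the bounds above gives $\lambda(t_0)\le\theta<1$ and $\lambda(t_1)\ge\eta>1$, which proves both parts with strict inequalities.

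An alternative route avoids the eigenfunction entirely. One would compare directly with $h_0$ through the sup-norm spectral radius and then argue that this coincides with the $C^1$ spectral radius via a Lasota--Yorke inequality. The eigenfunction route is cleaner, and I would use it. The one point requiring care is justifying that $v_t>0$ everywhere rather than merely $v_t\ge0$. If necessary this can be obtained by noting that $\mathcal L_t^n v_t=\lambda(t)^n v_t$, and that after finitely many iterates the images under compositions of $T_1,T_2$ of any point reach a neighbourhood where $v_t>0$.
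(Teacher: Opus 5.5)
Your proof is correct and follows essentially the paper's argument. Positivity of $\mathcal L_t$ lets you iterate the pointwise inequality (3.1) or (3.2), and the exponential growth rate of the iterates is then identified with $\lambda(t)$. Your version is tighter than the paper's on two points: the sandwich $a\le v_t\le b$ with the positive eigenfunction supplies the lower bound $\|\mathcal L_t^n 1\|_\infty\ge (a/b)\lambda(t)^n$ that the paper only asserts from the spectral gap (which tacitly needs the leading spectral projection of $h_0$ to be nonzero), and keeping $\theta$ and $\eta$ gives the strict conclusions $\lambda(t_0)\le\theta<1$ and $\lambda(t_1)\ge\eta>1$.
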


\begin{proof}
The proof is very short, so we include it for the reader's convenience.
For part (1) we observe that  applying  $\mathcal L_{t_0}$  repeatedly
 gives 
 that 
for any  $x \in [1/\alpha, 1]$:
$$
\cdots
\leq \mathcal L_{t_0}^n h_0(x) \leq 
\cdots
\leq 
\mathcal L_{t_0}^2 h_0(x) 
\leq 
\mathcal L_{t_0} g(x) \leq h_0(x) \eqno(3.3)
$$
by virtue of the  positivity of the operator and    assumption (3.1).
However,  the spectral properties described in  Lemma \ref{ruelle} imply that 
 $$\lim_{n \to +\infty } \|\mathcal L_{t_0}^n h_0\|_\infty^{1/n} = \lambda(t_0).$$  Moreover, since $h_0>0$ we trivially have 
$\lim_{n \to +\infty } \| h_0\|_\infty^{1/n} = 1$.   The  conclusion of part (1) comes from combining these 
observations  with the inequalities in  (3.3).
  
  For part (2) we similarly observe that by applying repeated $\mathcal L_{t_0}$ then 
  the positivity of the operator and 
  assumption (5)  we have that 
for all $x \in [1/\alpha, 1]$ 
$$
h_1(x)  \leq
\mathcal L_{t_1} h_1(x) 
\leq 
\mathcal L_{t_1}^2 h_1(x) 
\leq 
\cdots
\leq \mathcal L_{t_1}^n h_1(x) \leq 
\cdots .
\eqno(3.4)
$$
As in part (1), by Lemma \ref{ruelle} we see that
 $$\lim_{n \to +\infty } \|\mathcal L_{t_1}^n h_1\|_\infty^{1/n} = \lambda(t_1)$$
  and, again,  trivially 
  $\lim_{n \to +\infty } \| h_1\|_\infty^{1/n} = 1$ and so the conclusion comes from  (3.4). 
\end{proof}

\begin{remark}
In Lemma \ref{minmax} it would suffice to consider positive test functions $h_0, h_1: [\alpha,1]] \to \mathbb R^+$ which are only continuous.  However, in our application in the next section the test functions we construct will actually be polynomial, so there is no real loss in assuming in the lemma that these functions are $C^1$.
\end{remark}
\section{Implementation}
We need to convert the theoretical bounds in the previous section into an effective method to numerically estimate $\dim(X)$.
In light of the general  Lemmas \ref{dim}  and \ref{minmax} we have  useful criteria to check whether $\dim(X)$ lies in a given interval $[t_0, t_1]$.   

We begin with the following reformulation.

 \begin{proposition}[Criteria for  bounds on $\dim(X)$]\label{crit}
Let  $t_0 < t_1$.  A sufficient condition that 
$$t_0 < \dim(X) < t_1$$
 is the existence of  continuous  functions
$h_0, h_1: [1/\alpha,1] \to \mathbb R$ such that 
$$\sup_{-1/\alpha\leq x \leq 1} \frac{\mathcal L_{t_0}h_0(x)}{h_0(x)} <1 
\hbox{ and }
\sup_{-1/\alpha\leq x \leq 1} \frac{\mathcal L_{t_1}h_1(x)}{h_1(x)}>1.\eqno(4.1)$$
\end{proposition}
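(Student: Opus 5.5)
The plan is to combine Lemma \ref{minmax} with the two facts used in the proof of Lemma \ref{dim}: the map $t\mapsto\lambda(t)$ is continuous and strictly decreasing, and $\lambda(\dim(X))=1$ (the Bowen--Ruelle pressure formula). First I will correct the statement so that it can be proved. The test functions must be strictly positive on $[1/\alpha,1]$, and the suprema should be taken over $[1/\alpha,1]$. The second condition in (4.1) should be an \emph{infimum}, $\inf \mathcal L_{t}h/h>1$, as in (3.2). Finally, since $\lambda$ is decreasing, the two conditions have to be matched to the endpoints in the right way:
\begin{itemize}
\item a function with $\sup \mathcal L_t h/h<1$ forces $\lambda(t)<1=\lambda(\dim X)$, hence $t>\dim(X)$, so it certifies the \emph{upper} bound;
\item a function with $\inf \mathcal L_t h/h>1$ certifies the \emph{lower} bound.
\end{itemize}
So I will prove: if $h_1>0$ satisfies $\sup \mathcal L_{t_1}h_1/h_1<1$ and $h_0>0$ satisfies $\inf \mathcal L_{t_0}h_0/h_0>1$, then $t_0<\dim(X)<t_1$.

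The first step is to make the inequalities of Lemma \ref{minmax} strict, since that lemma only gives $\lambda\le 1$ or $\lambda\ge1$. The ratio $\mathcal L_{t_1}h_1/h_1$ is continuous on the compact interval $[1/\alpha,1]$ and $h_1>0$ there. Hence the supremum is attained and equals some $1-\varepsilon<1$, so pointwise $\mathcal L_{t_1}h_1\le(1-\varepsilon)h_1$.

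Positivity of $\mathcal L_{t_1}$ lets me iterate this to get $0<\mathcal L_{t_1}^n h_1\le (1-\varepsilon)^n h_1$. Taking $n$-th roots of sup norms and using the spectral limit $\|\mathcal L_{t_1}^n h_1\|_\infty^{1/n}\to\lambda(t_1)$ from the proof of Lemma \ref{minmax} gives $\lambda(t_1)\le 1-\varepsilon<1$. In the same way $\mathcal L_{t_0}h_0\ge(1+\varepsilon')h_0$ gives $\lambda(t_0)\ge1+\varepsilon'>1$.

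The main obstacle is the spectral limit itself. It needs $h$ to have a nonzero component along the leading eigenfunction, and only $C^0$ regularity is assumed here. I would handle this using Lemma \ref{ruelle}: the eigenprojection of the simple eigenvalue $\lambda(t)$ is given by integration against a positive eigenmeasure, so it is nonzero on any strictly positive $h$. To get around the $C^0$ versus $C^1$ issue, I would sandwich $h$ between positive constant multiples of the constant function $1$, which is $C^1$. This is possible since $h$ is continuous and positive on a compact set. Positivity of $\mathcal L_t$ then transfers the growth rate of $\mathcal L_t^n 1$ to $\mathcal L_t^n h$.

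The final step is to conclude from $\lambda(t_1)<1=\lambda(\dim X)<\lambda(t_0)$. Strict monotonicity of $\lambda$ gives $t_0<\dim(X)<t_1$.
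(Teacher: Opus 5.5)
Your proof is correct and follows the route the paper intends: the paper gives no separate proof, presenting the proposition as a reformulation of Lemma \ref{minmax} combined with the monotonicity of $\lambda(t)$ and the identity $\lambda(\dim X)=1$ from Lemma \ref{dim}. Your repairs to the statement are all needed and correctly handled: strictly positive test functions, an infimum in the second condition, the swapped roles (infimum $>1$ at $t_0$ and supremum $<1$ at $t_1$, as in the paper's two-decimal example), sharpening $\lambda\le 1$ to $\lambda\le\sup \mathcal L_t h/h<1$ for strict inequalities, and sandwiching $h$ between multiples of $1$ for the $C^0$ case.
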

\medskip

To apply  Proposition \ref{crit} we  need to find appropriate  functions $h_0$, $h_1$ 
and values $t_0$, $t_1$. We address these issues in the next subsections.

\subsection{Interpolation}
Assume we have candidate values for $t_0 < t_1$.
To choose the functions $h_0$, $h_1$
we proceed  using a simple collocation method.

\begin{definition}
Fix a natural number $m \geq 2$.
\begin{enumerate}
\item
Consider the  {\it Chebychev points} 
$$x_m  = \left( \frac{1 + 1/|\alpha|}{2} \right)\cos \left(\frac{n \pi}{2m} \right)
+ \left(\frac{1}{\alpha} + 1\right) 
 \hbox{ for $0 \leq n \leq m$}
$$
scaled to lie in the interval  $[1/\alpha, 1]$.
\item
Consider the {\it Lagrange polynomials}  $\ell_m:[1/\alpha, 1] \to \mathbb R$  
defined by
$$
\ell_k(x) = \frac{\prod_{n\neq k} (x-x_m)}{\prod_{n \neq k} (x_k-x_m)} \hbox{ for $0 \leq n \leq m$}
$$
scaled to be defined  on  the interval  $[1/\alpha, 1]$.
\end{enumerate}
\end{definition}
In particular,  we have the useful property 
$$
\ell_k(x_m) 
=
\begin{cases}
1 & \hbox{ if } k = n\\
0 & \hbox{ if } k \neq n\\
\end{cases}
$$ 
for $0 \leq k,n \leq m$.
This allows us to  proceed as follows.  

\begin{enumerate}
\item[(i)]
Let $[t_0, t_1]$ be an interval in which we what to verify that the value $\dim(X)$ can be found.
\item[(ii)]
For each of the choices $i=1,2$ we can associate the $(m+1)\times (m+1)$-matrices
$$
M _i= \left( (\mathcal L_{t_i}\ell_k)(x_m) \right)_{k,n=0}^m 
$$
\item[(iii)]
Providing $m$ is sufficiently large,  there is a  maximal 
positive eigenvalue $\lambda_i > 0$ and a corresponding 
right eigenvector 
 $\underline v^i = (v^i_0, \cdots,  v^i_m)$ has strictly positive entries, i.e., 
 $\lambda_i \underline v^i = \underline v^i M_i$  and $v_i > 0$ for $0 \leq i \leq m$.
\item[(iv)]
Associate the two polynomial  functions
$$
h_{i}(x) = \sum_{n=0}^m v_n^i \ell_n(x) \hbox{ for } x \in [1/\alpha,1]  \quad (\hbox{for }i=0,1).
$$
\item[(v)] Finally, if 
$$\sup_{1/\alpha\leq x \leq 1} 
\frac{(\mathcal L_{t_0}h_0)(x)}{h_0(x)} <1\ \hbox{ and }
\inf_{1/\alpha\leq x \leq 1} \frac{(\mathcal L_{t_1}h_1)(x)}{h_1(x)} >1 \eqno(4.2)$$ 
then we can deduce that $t_0 < \dim(X) < t_1$. 
\end{enumerate}

\begin{remark}[Effectiveness of the algorithm]
If $t_0 < \dim X < t_1$ but (4.2) isn't satisfied, then by increasing $m$ it can be achieved (as claimed above  in (iii)).
More precisely, providing $m$ is sufficiently large  and $t_0 < \dim(X) < 1$ 
(or equivalently $\lambda(t_0) > 0 > \lambda(t_1)$)
we  have  that
\begin{enumerate}
\item[(a)] for $h_i(x) > 0$ ($i=0,1$); and
\item[(b)] the appropriate inequality in (9) holds.  
\end{enumerate}

This follows by analytic perturbation theory, but for the present concrete setting if suffices to check this conclusion empirically.   

\end{remark}

\subsection{Bisection method}
In order to obtain candidate values of $t_0 < t_1$ which give upper and lower bounds on $\dim X$ we proceed by  a simple bisection method.   This generates a sequence of improving bounds 
$t_0^{(k)} < \dim X < t_1^{(k)}$, for $k\geq 0$ where $|t_0^{(k)} - t_1^{(k)}| \to 0$ as $k \to +\infty$.

\begin{enumerate}
\item We begin by choosing obvious upper and lower bounds $t_0^{(1)} < \dim X < t_1^{(1)}$ which are trivially valid.  For example,  we can let  $t_0^{(1)} = 3/10$ and $t_1^{(1)} = 7/10$.
\item
We proceed to construct the sequences $t_0^{(k)}$ and  $t_1^{(k)}$ ($k \geq 0$) inductively.
Given $t_0^{(k)} < \dim X < t_1^{(k)}$ (for $k \geq 1$) we can provisionally set 
$$s = \frac{1}{2} (t_1^{(k)} - t_0^{(k)})$$
\item We can associate to $\mathcal L_s$ a function $h_s$ using  the interpolation method described (ii), (iii) and (iv) above.  Typically, we have one of the following two cases.
\begin{enumerate}
\item  If we have 
$$\sup_{1/\alpha\leq x \leq 1} 
\frac{(\mathcal L_{s}h_s)(x)}{h_s(x)} <1$$
then set $t_0^{(k+1)} = s  $ and $t_1^{(k+1)} = t_1^{(k)} $.
\item If we have 
$$\inf_{1/\alpha\leq x \leq 1} \frac{(\mathcal L_{s}h_s)(x)}{h_s(x)} >1$$ 
then set $t_1^{(k+1)} = s  $ and $t_0^{(k+1)} = t_0^{(k)} $.
\end{enumerate}
\end{enumerate}
This produces   a sequence of shrinking  intervals $[t_0^{(k)},t_1^{(k)}]$ ($k \geq 0$) such that:
\begin{enumerate}
\item
$t_0^{(k)} \leq \dim(X)  \leq t_1^{(k)}]$; and 
\item
$|t_1^{(k)} - t_0^{(k)}| = 2^{-k+1}|t_1^{(1)} - t_0^{(1)}|$. 
\end{enumerate}
\begin{remark}
 In neither of the hypotheses  in (a) and (b) holds then it will  be necessary to increase the value of $m$, as mentioned in the previous remark.
\end{remark}

\subsection{Numerical estimates}
We can use the  approximation to   $g$  in \cite{mathar}.  
In particular, this gives estimates for 
the first $156$ terms in the 
 Chebychev  polynomial expansion, each coefficient presented in excess 
of $150$ decimal places.

\begin{example}[Estimate to $2$ decimal places]
If we only require a more modest rigorous bound  $0.53 \leq \dim(X) \leq 0.54$ then we can apply the 
 criteria for dimension bounds on $X$ using simpler explicit polynomials 
\begin{enumerate}
\item
For 
$t_0= 0.53$ and 
$$h_0(x) = -2933 + 148 x$$ we can  estimate 
$$
\min \frac{\mathcal L_{t_0}h_0(x)}{h_0(x)} = 1.00023 \ldots
$$
\item 
For 
$t_1= 0.54$ and 
$$
\begin{aligned}
h_1(x) &= -293406 + 15032 x - 9718 x^2 + 4200 x^3 \cr
&\qquad- 2391 x^4 + 
  1336 x^5 - 773 x^6 + 432 x^7
  \end{aligned}
  $$ we can estimate
$$
\max \frac{\mathcal L_{t_1}h_1(x)}{h_1(x)} = 0.999964  \ldots
$$
\end{enumerate}
In particular, we can deduce 
from Proposition \ref{crit}
that $0.53 \leq \dim(X) \leq 0.54$.
\end{example}

\begin{example}[Estimate to $60$ decimal places]
Working to $100$ decimal places  we can  choose $m=300$ and  implement the approach described in the previous section.
Starting from 
$t_0^{(1)}=3/10$ and $t_1^{(1)}=7/10$
we can apply the bisection method  until we get suitable values.
 For 
$$
\begin{aligned}
t_0 = &
0.5380451435 8054991167 141556737 4986292737\cr
&\qquad 9649658778 5696090719 1\cdots \cr
\end{aligned}
$$
we can   associate a suitable polynomials $h_0$ 
and  then estimate 
$$
\begin{aligned}
\min \frac{\mathcal L_{t_0}h_0(x)}{h_0(x)} = &1.0000000000000000000000000000000000000000000000\cr
&\qquad 00000000000002043024411083008731521919696166 \ldots
\end{aligned}
$$
\item For
$$
\begin{aligned}
t_1 = &
0.538045143580549911671415567374986292737\cr
&\qquad964965877856960907193
\end{aligned}
$$
we can   associate a suitable polynomials $h_0$ 
and then estimate 
$$
\begin{aligned}
\max \frac{\mathcal L_{t_0}h_0(x)}{h_0(x)} = & 0.9999999999999999999999999999999999999999999999\cr
&\qquad 999999999999995657268412268496192385300641278 \ldots
\end{aligned}
$$
\end{example}

\begin{remark}
The above calculations were carried out using Mathematica using its internal routines  and error estimates.  However, for complete confidence in the estimate it would probably be best to use a more transparent programme.
\end{remark}

\section{Final comments}
Since the aim of this note is to explain one way in which  precise estimates on $g$ can be converted into good bounds on $\dim(X)$ it is useful to review the sources of loss of accuracy and the restrictions on this approach.

\subsection{Approximation of the transfer operators}

The contractions $T_0, T_1: [1/\alpha,1] \to [1/\alpha,1]$ used in the definition of the transfer operators $\mathcal L_t$ are defined in terms of $g$ (and $\alpha$) .
If  $g(z)$ 
has a uniform polynomial approximation   by a polynomial  $\tilde g(z)$ 
 on the  disk  $ [1/\alpha,1] \subset D = \{z \in \mathbb C \hbox{ : } |z| < \sqrt{8}\}$ (see \cite{Lanford})
then the corresponding contractions $T_i$ have comparable
approximations   $\widetilde T_i$ ($i=1,2$)  on  $[1/\alpha, 1]$
(and similarly for their derivatives by Cauchy's theorem).
The associated operator $\widetilde {\mathcal L}_t$ applied to the  approximating polynomial  $h: [\alpha, 1] \to \mathbb R$ (of degree $m$, say)  has  a uniform bound
 $$
 \begin{aligned}
 &\|\widetilde {\mathcal L}_t h -  {\mathcal L}_t h\|_\infty \cr
 &\leq \sum_{i=1}^2 \|(T_i')^t - (S_i')^t\|_\infty \|h\circ T_i\|_\infty + 
 \|(T_i')^t \|_\infty \|h\circ T_i - h\circ S_i\|_\infty
 \end{aligned}\eqno(5.1)
 $$
 and by the mean value theorem:
 $
 \|h\circ T_i - h\circ S_i \|_\infty \leq \| h'\|_\infty \|T_i - S_i\|_\infty.
 $
 Let  $\pi$ be the  projection onto polynomials of degree $m$ then we can take $h_0$ to be the maximal eigenfunction of $\mathcal L \pi$ 
(and  $h$   to be the  maximal eigenfunction for $\mathcal L_t$) then 
by  \cite{bs} 
 there exists $C>0$ and $0 < \theta < 1$ such that 
 $\|\mathcal L_t - \mathcal L_t \pi\|_\infty \leq C \|\mathcal L_t\| \theta^m$
 on $D$ for $m \geq 1$.  In particular, 
 $\|h - h_0\| \to 0$ on $D$ as $m \to +\infty$ and  bound 
$ \|h'\|_\infty$ on $[1/\alpha,1]$  by  Cauchy's theorem.   This gives boundes on (5.1).



\subsection{Checking the minimax bounds} In \S 4 
we need to effectively   estimate the  supremum
or infimum 
 of functions $\frac{{\mathcal L_t h}}{h}(x)$ over $[1/\alpha,1]$.
To this end the following observation is helpful.  
\begin{lemma}\label{single}
If $h: D \to \mathbb R$ is the collocation function associated to  $m$ then 
$$\sup_{1/\alpha \leq x \leq 1} \left|\left(\frac{\mathcal L_t h}{h}\right)'(x)\right| \to 0 \hbox{ as } m \to +\infty.$$
\end{lemma}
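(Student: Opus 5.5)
The plan is to compare the collocation function $h=h^{(m)}$ with the true maximal eigenfunction $h_t$ of $\mathcal L_t$ from Lemma \ref{ruelle}. Since $\mathcal L_t h_t=\lambda(t)h_t$, the ratio $\mathcal L_t h_t/h_t$ is the constant $\lambda(t)$ and has zero derivative. It therefore suffices to show that $h\to h_t$ in $C^1([1/\alpha,1])$, after normalisation and up to a positive scalar, and that $\mathcal L_t h\to \mathcal L_t h_t$ in $C^1$ as well. Because $h_t$ is strictly positive on the compact interval $[1/\alpha,1]$, the quotient rule then gives
$$\left(\frac{\mathcal L_t h}{h}\right)' = \frac{(\mathcal L_t h)'h-(\mathcal L_t h)h'}{h^2}\to \frac{(\mathcal L_t h_t)'h_t-(\mathcal L_t h_t)h_t'}{h_t^2}=0$$
uniformly. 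Here we use that $h$ is bounded away from $0$ once $m$ is large, which is also what justifies step (iii) of the algorithm.

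First, I would set up the collocation scheme as a projection method. Let $\pi_m$ denote Lagrange interpolation at the Chebyshev points. The matrix $M$ in step (ii) is the matrix of $\pi_m\mathcal L_t$ restricted to polynomials of degree $m$, so $h$ is, up to scaling, the leading eigenfunction of $\pi_m\mathcal L_t$.

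Second, I would work on the Banach space $A(D)$ of functions holomorphic on a complex neighbourhood $D$ of $[1/\alpha,1]$ and continuous on its closure. The contractions $T_1,T_2$ extend holomorphically and map $\overline D$ strictly inside $D$; this follows from Lanford's domain and the analyticity of $g$. The derivatives $T_i'$ are nonvanishing there, so $|T_i'|^t$ can be written as $(\pm T_i')^t$ with a holomorphic branch. Hence $\mathcal L_t$ is compact on $A(D)$, and interpolation at Chebyshev nodes of analytic functions converges geometrically. This gives $\|\mathcal L_t-\pi_m\mathcal L_t\|_{A(D)}\le C\theta^m$, as in the estimate quoted from \cite{bs}.

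Third, I would apply standard perturbation theory for isolated simple eigenvalues. The spectral gap in Lemma \ref{ruelle} transfers to $A(D)$, since the eigenfunction is analytic. Then the Riesz projection of $\pi_m\mathcal L_t$ near $\lambda(t)$ converges in norm to that of $\mathcal L_t$. So the leading eigenvalue $\lambda_m\to\lambda(t)$, and after normalising $h(x_0)=h_t(x_0)$ we get $h\to h_t$ in $A(D)$.

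Finally, Cauchy estimates on a slightly smaller neighbourhood of $[1/\alpha,1]$ upgrade sup-norm convergence to convergence of first derivatives. Applying the same argument to $\mathcal L_t h$, using boundedness of $\mathcal L_t$ on $A(D)$, gives $C^1$ convergence of $\mathcal L_t h$ to $\lambda(t)h_t$, and the quotient-rule computation above finishes the proof.

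The main obstacle is the second step. One has to verify that the maps, and the weights $|T_i'|^t$, extend holomorphically to a fixed complex neighbourhood that is mapped compactly inside itself, with the weights nonvanishing so that real powers make sense. One also needs the geometric convergence $\|\mathcal L_t-\pi_m\mathcal L_t\|\to0$ in operator norm, rather than merely pointwise. My first attempt would be to take $D$ to be a Bernstein ellipse around $[1/\alpha,1]$. Inside that ellipse I would use the classical bound $\|f-\pi_mf\|_\infty\le C\rho^{-m}\|f\|_{E_\rho}$ for Chebyshev interpolation, applied to $f=\mathcal L_t u$, which is holomorphic on a larger ellipse $E_\rho$ because $T_i(\overline D)\Subset D$.
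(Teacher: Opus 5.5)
Your proposal is correct and takes essentially the same route as the paper's proof. Both use the quotient rule, operator-norm convergence of the collocation approximation on a space of holomorphic functions (the estimate quoted from \cite{bs}), convergence of the collocation eigenfunction to the strictly positive eigenfunction of $\mathcal L_t$ (whose ratio $\mathcal L_t h_t/h_t\equiv\lambda(t)$ the paper leaves implicit), and Cauchy estimates to get convergence of first derivatives; the only difference is that you work with $\pi_m\mathcal L_t$, which is what the collocation matrix actually represents, rather than the paper's $\mathcal L_t\pi$, but these have the same nonzero spectrum with eigenfunctions exchanged by $\pi$ and $\mathcal L_t$, so nothing changes.
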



\begin{proof}
By the quotient rule
$$
\left(
\frac{\mathcal L_t h}{h}
\right)'(z)
=
\frac{(\mathcal L_t h)' (z) h(z) - (\mathcal L_t h)(z) h' (z)}{h(z)^2}
 \hbox{ for $z \in D$.}
$$
As above, by  \cite{bs} we have  that  $\|\mathcal L_t  - \mathcal L_t \pi\|_\infty \to 0$ on $D$ as $m \to +\infty$ 
which gives that   $\|h - h_0\| \to 0$.    Thus 
$\sup_{\alpha \leq x \leq 1} |h'(x) - h_0'(x)| \to 0$ by Cauchy's theorem 
 and similarly $\sup_{\alpha \leq x \leq 1} |(\mathcal L_t h)'(x) - (\mathcal L_th_0)'(x)| \to 0$.  Finally, since 
$\inf_{x\in I}h_m(x) \to \inf_{x\in I}h(x) > 0$ as $n \to +\infty$, 
combining these results gives the conclusion.
\end{proof}

\end{document}